\documentclass[a4paper, twoside, 10pt]{article}
\usepackage{eurosym}
\usepackage{amsmath,amsthm,amsfonts,latexsym,amscd,amssymb, enumerate}
\usepackage{hyperref, xcolor}
\numberwithin{equation}{section}
\input xypic
\newtheorem{theorem}{Theorem}[section]
\newtheorem{lemma}{Lemma}[section]
\newtheorem{corollary}{Corollary}[section]
\newtheorem{proposition}{Proposition}[section]
\newtheorem{definition}{Definition}[section]

\theoremstyle{remark}
\newtheorem{remark}{Remark}[section]

\newcommand{\cl}{\mathcal{L}}
\newcommand{\cs}{\mathcal{S}}

\newcommand{\cri}{\mathcal{R}}
\newcommand{\clb}{\overline{\mathcal{L}}}
\newcommand{\csb}{\overline{\mathcal{S}}}
\newcommand{\cbb}{\overline{\mathcal{B}}}
\newcommand{\M}{\overline{M}}
\newcommand{\ch}{\mathfrak{h}}

\date{}
\title{\textbf{Submanifolds of Bochner Holomorphic Statistical Manifolds}}
\author{Harmandeep Kaur$^1$\thanks{harmandeepkaur1559@gmail.com} ~and Gauree Shanker$^1$\thanks{corresponding author, Email: gauree.shanker@cup.edu.in}
\vspace{.3cm}\\
$^1$ Department of Mathematics and Statistics,\\
 Central University of Punjab, Bathinda, Punjab-151401, India.}

\begin{document}
\maketitle
\begin{center} \textbf{Abstract} \end{center}
	In this paper, we study the submanifolds of holomorphic statistical manifolds with vanishing Bochner curvature tensor (Bochner holomorphic statistical manifolds). We study the Lagrangian statistical submanifolds and discuss their conformal flatness under the assumption of vanishing Bochner curvature tensor. Next we study the holomorphic submanifolds of Bochner holomorphic statistical manifolds and prove that the doubly autoparallel holomorphic submanifold of a Bochner holomorphic statistical manifold has vanishing Bochner curvature tensor. \\ %
	\textbf{Mathematics Subject Classification (2020):} 53B05, 53B20, 53C20, 53C25, 53C40.\\
	\textbf{Keywords:} Holomorphic statistical manifolds, Bochner curvature, submanifolds, Bochner holomorphic statistical manifolds. 

   \section{Introduction}\label{hssec1}
Information geometry has provided a powerful geometric view point to understand statistical models. In this context, the theory of statistical manifolds emerges as a natural bridge between differential geometry, probability theory, affine geometry and Hessian geometry. The geometric formulation of statistical inference was initiated by Rao through the Fisher information metric \cite{bc12}. Lauritzen introduced the abstract notion of a statistical manifold \cite{bc10}, while Amari developed the modern theory and established its geometrical framework \cite{bc13,bc14}. From statistical point of view, statistical manifolds are the Riemannian manifolds whose points represent probability distributions. Geometrically, a statistical manifold is a smooth manifold $M$ endowed with Riemannian metric $g$ and torsion free affine connections $(\nabla, \nabla^*)$ defined by $$Xg(Y, Z) = g( \nabla_XY, Z) +  g(Y, \nabla^*_XZ)$$ for arbitrary vector fields $X, Y, Z$ on $M$ such that the $(0, 3)$-tensor field $\nabla g$ is totally symmetric. When the dual connection $\nabla^*$ conincides with $\nabla$, the affine connection becomes the Levi-Civita connection and the statistical manifold reduces to a Riemannian manifold. Inspired by the different types of complex and contact structures on Riemannian manifolds \cite{bc30}, the theory of statistical manifolds was further developed, giving rise to complex statistical manifolds and contact statistical manifolds. In this line of research, Kurose first introduced the complex version of statistical manifolds, called holomorphic statistical manifolds, which was further studied systematically by Furuhata \cite{bc1, bc2}. A holomorphic statistical manifold $\M$ is an almost complex manifold with almost complex structure $J$ and a statistical structure $(\overline{\nabla}, g)$ such that $\omega$ is a $\overline{\nabla}$-parallel $2$-form on $\M$ defined by $\omega(X,Y)= g(X,JY)$. The theory of holomorphic statistical manifolds lie at the intersection of complex geometry, information geometry, Hessian geometry and Riemannian geometry making it a interesting field of research.\\
On the other hand, the study of conformal nature of smooth structures has been a fundamental aspect of geometry. In this regard, Weyl curvature tensor, named after Hermann Weyl, is a fundamental tensor for studying the conformal flatness in dimension $\geq4.$ It plays an important role in general theory of relativity and pseudo-Riemannian geometry as it is a measure of the curvature of spacetime. In particular, it is the invariant part of the Riemannian curvature tensor under a conformal transformation of the metric. Building on Weyl's work, Bochner in \cite{bc5} introduced the complex couterpart of the Weyl curvature tensor known as the  Bochner curvature tensor. The study of Bochner curvature tensor  is particularly interesting in complex geometry, since it plays a role analogous to the Weyl curvature tensor in Riemannian geometry, and its vanishing corresponds to the complex analogue of conformal flatness. Tachibana \cite{bc6}, further, studied the Bochner curvature tensor by giving its components with respect to real local coordinates. Following its significance, the study of K\"{a}hler manifolds with vanishing Bochner curvature (Bochner K\"{a}hler manifolds) and their submanifolds emerged as an important area of research in complex geometry \cite{bc111, bc16, bc17, bc18}. However, it has not yet been explored in the setting of statistical manifolds. Since holomorphic statistical manifolds are generalization of K\"{a}hler manfolds, it is natural to study their conformal flatness using the statistical Bochner curvature tensor.\\ 
To address this gap in the literature, the aim of this paper is to study the geometry of submanifolds of holomorphic statistical manifolds with vanishing Bochner curvature tensor, which we call Bochner holomorphic statistical manifolds. In particular, we study Lagrangian statistical submanifolds and holomorphic submanifolds of Bochner holomorphic statistical manifolds. Our aim is to prove the following:

  \begin{theorem}\label{hstheorem1} 
       Let $\M$ be a holomorphic statistical manifold of real dimension ${2m}, m \geq 4$ with vanishing Bochner curvature tensor. Then a doubly autoparallel Lagrangian statistical submanifold $M$ of $\M$ is conformally flat.
        \end{theorem}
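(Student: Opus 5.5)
The plan is to restrict the vanishing statistical Bochner tensor $\overline{B}=0$ to tangent vectors of $M$, use the Gauss equation to pass to the intrinsic curvature of $M$, and recognise the resulting identity as the vanishing of the (statistical) Weyl conformal curvature tensor of $M$.

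First I write out $\overline{B}$ of the holomorphic statistical manifold $\M$ in the standard Tachibana form, built from the statistical curvature tensor $\overline{R}$ (or its symmetrisation $\overline{S}=\frac12(\overline{R}+\overline{R}^*)$), the Ricci tensor $\overline{\mathrm{Ric}}$, the Ricci operator $\overline{Q}$ and the scalar curvature $\bar\tau$. Setting $\overline{B}=0$ gives $\overline{R}(X,Y)Z$ as an explicit combination of terms of the shape $\overline{\mathrm{Ric}}(Y,Z)X$, $g(Y,Z)\overline{Q}X$, $\overline{\mathrm{Ric}}(JY,Z)JX$, $g(JY,Z)\overline{Q}JX$, $\overline{\mathrm{Ric}}(X,JY)JZ$, $g(X,JY)\overline{Q}JZ$, together with $\bar\tau$ times the curvature of constant holomorphic sectional curvature, with coefficients $\frac{1}{2m+4}$ and $\frac{\bar\tau}{(2m+2)(2m+4)}$.

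Next I take $X,Y,Z,W$ tangent to $M$. Since $M$ is Lagrangian, $JX,JY,JZ$ are normal, so every term in which a $J$ is contracted against tangent vectors, such as $g(JY,Z)$ or $g(X,JY)$, vanishes. The terms $g(JX,W)$ and $\overline{\mathrm{Ric}}(JY,Z)g(JX,W)$ also drop out. What remains is
\begin{equation*}
g(\overline{R}(X,Y)Z,W)=\tfrac{1}{2m+4}\big[\overline{\mathrm{Ric}}\text{-terms}\big]-\tfrac{\bar\tau}{(2m+2)(2m+4)}\big[g(Y,Z)g(X,W)-g(X,Z)g(Y,W)\big].
\end{equation*}
Because $M$ is doubly autoparallel, $h=h^*=0$, so the Gauss equation gives $g(\overline{R}(X,Y)Z,W)=g(R(X,Y)Z,W)$. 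The Codazzi equation gives $\overline{R}(X,Y)Z$ tangent, and the same Gauss relation holds for $\overline{R}^*$.

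The key step, and the main obstacle, is expressing $\overline{\mathrm{Ric}}$ restricted to $TM$ and $\bar\tau$ through the intrinsic $\mathrm{Ric}$ and $\tau$ of $M$. I take an adapted frame $\{e_1,\dots,e_m,Je_1,\dots,Je_m\}$. Then $\overline{\mathrm{Ric}}(Y,Z)=\mathrm{Ric}(Y,Z)+\sum_i g(\overline{R}(Je_i,Y)Z,Je_i)$. Using $\overline{\nabla}J$-compatibility of holomorphic statistical structures, $\overline{R}(X,Y)J=J\overline{R}^*(X,Y)$, this extra sum becomes $\sum_i g(\overline{R}^*(Je_i,Y)Je_i,Z)$-type terms. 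I then evaluate these by substituting the Bochner expression again, with arguments now mixed tangent/normal; this time the $J$-terms survive. It yields a linear system relating $\overline{\mathrm{Ric}}|_{TM}$, $\mathrm{Ric}$, $g$ and $\bar\tau$. I would solve it by tracing to get $\bar\tau$ in terms of $\tau$, then back-substitute to get $\overline{\mathrm{Ric}}|_{TM}=\alpha\,\mathrm{Ric}+\beta\,\tau g$ for explicit constants depending on $m$. If dual curvature appears, I handle it by working with $\overline{S}$ and using the symmetric relations for both connections.

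Substituting back gives
\begin{equation*}
R(X,Y)Z=\tfrac{1}{m-2}\big(\mathrm{Ric}(Y,Z)X-\mathrm{Ric}(X,Z)Y+g(Y,Z)QX-g(X,Z)QY\big)-\tfrac{\tau}{(m-1)(m-2)}\big(g(Y,Z)X-g(X,Z)Y\big),
\end{equation*}
which I check coefficient-by-coefficient. This is exactly the vanishing of the Weyl conformal curvature tensor of $(M,\nabla,g)$. Since $\dim M=m\ge4$, this means $M$ is conformally flat, which is where the hypothesis $m\geq4$ enters.
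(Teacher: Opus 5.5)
Your proposal is correct and is essentially the paper's proof: restrict $\cbb=0$ to $TM$ (every $J$-term drops out since $M$ is Lagrangian), use the Gauss equation with $h=h^*=0$, and get the normal contribution $\sum_i g(\csb(Je_i,Y)Je_i,W)$ to $\clb|_{TM}$ from the Bochner formula with mixed tangent/normal arguments; then trace to get $\overline\rho=\frac{4(m+1)}{m-1}\rho$ and back-substitute to reach $\mathcal{C}^s=0$ (the paper just carries the $h,h^*$ terms until the last line). The one thing to tighten is to work with the symmetrised tensor $\csb$ from the start rather than $\overline{R}$, because the Bochner hypothesis, the Ricci tensors and the statistical Weyl tensor are all defined through $\csb$ and $\cs$, so your final identity should have $\cs$, not the curvature of $\nabla$, on the left.
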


  \begin{theorem}\label{hstheorem2} 
      Let $M$ be a $2n$ dimensional doubly autoparallel holomorphic submanifold of a holomorphic statistical manifold $\M$ of dimension $2m$. If $\M$ has vanishing Bochner curvature tensor, then the Bochner curvature tensor of $M$ also vanishes.
  \end{theorem}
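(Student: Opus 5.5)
The statement immediately above is Theorem \ref{hstheorem2}. The plan is to compare the statistical Bochner tensor of $\M$, restricted to tangent vectors of $M$, with the Bochner tensor of $M$ built from the induced structure. First I fix conventions. For a holomorphic statistical manifold the statistical curvature $R = \tfrac12(\overline{R}^{\nabla}+\overline{R}^{\nabla^*})$ plays the role of the Kähler curvature. The Bochner tensor $B$ is built from $R$, the Ricci tensor $Ric$, the Ricci operator $Q$, the scalar curvature $\rho$, $g$ and $J$, exactly as in Tachibana's real form. Its dimensional constants are $\frac{1}{2m+4}$ for the Ricci terms and $\frac{\rho}{(2m+2)(2m+4)}$ for the scalar terms.

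Since $M$ is holomorphic, $J(TM)=TM$, so $M$ inherits $J$, $g$ and the induced dual connections $\nabla,\nabla^*$. This makes $M$ a holomorphic statistical manifold of real dimension $2n$. Doubly autoparallel means the second fundamental forms $h$ and $h^*$ both vanish.

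\textbf{Step 1 (Gauss equations).} By the statistical Gauss equations, $\overline{R}(X,Y)Z = R(X,Y)Z + (\text{terms in } h,h^*)$ for $X,Y,Z\in TM$. Since $h=h^*=0$, the curvature tensors of $\overline{\nabla}$ and $\overline{\nabla}^*$ restrict exactly to those of $\nabla$ and $\nabla^*$ on $TM$. The same then holds for the averaged statistical curvature.

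\textbf{Step 2 (Ricci and scalar curvature).} Take a $J$-adapted orthonormal frame $\{e_1,\dots,e_{2n},e_{2n+1},\dots,e_{2m}\}$ with the first $2n$ vectors tangent to $M$. Then
\[
\overline{Ric}(X,Y)=Ric(X,Y)+\sum_{a>2n} g(\overline{R}(e_a,X)Y,e_a).
\]
The normal sum is a mixed sectional-type curvature. Using the Codazzi and Ricci equations with $h=h^*=0$, I expect the mixed components $\overline{R}(\xi,X)Y$ for normal $\xi$ to reduce to normal-connection curvature terms. I have to decide whether these vanish or can be controlled. This is the main obstacle. Doubly autoparallel does \emph{not} force $\overline R(\xi,X)Y\cdot\xi=0$ in general.

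The way around it is to use the hypothesis $\overline{B}=0$. It expresses $\overline{R}$ entirely through $\overline{Ric}$, $\overline\rho$, $g$ and $J$. Evaluate $\overline{B}(X,Y,Z,W)=0$ with all arguments in $TM$, and $\overline B(\xi,X,Y,\xi)=0$ with $\xi$ normal. The second evaluation, summed over the normal frame, expresses the normal sum in terms of $\overline{Ric}$ on $TM$, $\overline{Ric}$ on the normal bundle, and $\overline\rho$. Substituting it into the Ricci relation writes $Ric$ as an affine combination of $\overline{Ric}|_{TM}$, $\overline{\rho}\,g$ and normal traces. Tracing once more gives $\rho$.

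\textbf{Step 3 (conclusion).} Substitute these into the tangential identity $\overline{B}|_{TM}=0$, rewritten via Step 1 as $R = (\text{Bochner-type expression in } \overline{Ric}|_{TM},\overline\rho)$. Then show that the resulting expression for $R$ has the form of a Bochner combination of $Ric$ and $\rho$ with the $2n$-dimensional constants. It must be checked that the normal-trace contributions appear only through multiples of $g$ and $\omega$, and so are absorbed into the scalar terms. This is the delicate algebra. Holomorphicity makes all terms in $g(JX,W)$, $g(JY,Z)$, $\omega(X,Y)$ restrict compatibly, and the $J$-invariance of $Ric$ (from $\overline\nabla\omega=0$) lets the Ricci pieces split. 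Once $R$ has Bochner form in terms of $M$'s own curvature, this is exactly $B^M=0$.

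If the normal contributions turn out not to be pure multiples of $g$, I would fall back to the fact that $\overline{\nabla}$-parallel $J$ plus $h=h^*=0$ makes the normal bundle totally geodesic in the dual sense locally. That gives a local product structure in which $\overline{Ric}$ splits, yielding the needed trace identity.
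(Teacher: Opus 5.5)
Your plan is correct and is essentially the paper's proof. Setting the Bochner tensor of $\M$ to zero on $(\xi,X,Y,\xi)$ and summing over an orthonormal frame $\{\xi_a\}$ of the normal bundle gives $g(\clb X,Y)=\frac{m+2}{n+2}g(\cl X,Y)+\frac{\epsilon}{n+2}g(X,Y)$ with $\epsilon=\frac{n-m}{2(m+1)}\overline\rho+\frac12\sum_a g(\clb\xi_a,\xi_a)$; tracing gives $\frac{\overline\rho}{(m+1)(m+2)}=\frac{\rho}{(n+1)(n+2)}+\frac{4\epsilon}{(n+2)(m+2)}$, and substituting both into the tangential identity makes the $\epsilon$-terms cancel, leaving exactly the $2n$-dimensional Bochner expression. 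So the check you flagged does go through, since normal traces enter only through multiples of $g$, and your local-product fallback is never needed --- which is fortunate, because it is not valid in general (a linear $\mathbb{CP}^n\subset\mathbb{CP}^m$ is doubly autoparallel, yet $\mathbb{CP}^m$ has no local product splitting).
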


\begin{remark}
For convenience, we denote holomorphic statistical manifold by HSM throughout the paper.
\end{remark}

       \section{Preliminaries}\label{hssec2}
  In this section, we recall the preliminary notions and some important results required for our study. \\
Let $(M, \nabla, g)$ be a statistical manifold. Then, the curvature tensor field for affine connection $\nabla$ is defined as
\begin{align}
\cri(X, Y)Z = \nabla_X\nabla_YZ - \nabla_Y\nabla_XZ - \nabla_{[X, Y]}Z
\end{align}
for $X, Y, Z \in \Gamma(TM).$ We denote by $\cri^*$ and $\cri^g$, the curvature tensor fields for $\nabla^*$ and $\nabla^g$, respectively. Also the statistical curvature tensor $\cs \in \Gamma(TM)^{(1, 3)}$ is defined as \cite{bc2}
 \begin{align}\label{hssc1}
\cs(X, Y)Z = \dfrac{1}{2}[\cri(X, Y)Z + \cri^*(X, Y)Z].
 \end{align}

  \begin{definition}\label{hsssc}\cite{bc2}
For any $x \in M$ and a two-dimensional subspace $\Omega = span_{\mathbb{R}}\{u,v\}$ of $T_xM$, the statistical sectional curvature $\mathcal{K}$ is given by
  \begin{align*}
  \mathcal{K}(u \wedge v) = \dfrac{g(\cs(u, v)v, u)}{g(u, u)g(v, v) - (g(u, v))^2}. 
   \end{align*}
   $M$ is said to be of constant statistical sectional curvature $k (\in \mathbb{R})$ if $\mathcal{K} = k ~\forall ~x \in M$ and $\forall~ \Omega$.
      \end{definition}

\begin{remark}\cite{bc2}
The sectional curvature of a statistical manifold $(M, \nabla, g)$ is constant, say $k$ if and only if for $X, Y, Z \in \Gamma(TM),$ the statistical curvature tensor field is given by
  \begin{align}\label{hssecc1}
\cs(X, Y)Z = k\{g(Y, Z)X - g(X, Z)Y\}.
  \end{align}
\end{remark}
 \begin{definition}\label{hsrc}\cite{bc2}
Let $(M, \nabla, g)$ be an $n$-dimensional statistical manifold. Choose a local orthonormal frame $\{e_1, ..., e_n\}$ on $M.$ Then for $X, Y, Z \in \Gamma(TM),$ the statistical Ricci curvature $\cl \in \Gamma(TM^{(0, 2)}),$ statistical scalar curvature $\rho \in C^\infty(M)$ are defined, respectively, as        
          \begin{align}%\label{hsricc}
   &\cl(Y, Z) = tr\{X \mapsto \cs(X, Y)Z\} = \sum_{i=1}^{n}g(\cs(e_i, Y)Z, e_i), \label{hsricc} \\
   &\rho = tr_g\cl = \sum_{i=1}^{n}\cl(e_i, e_i). \label{hsscal}
      \end{align}
      \end{definition}
For a HSM $(\M,\overline{\nabla}, g, J)$, the following relations hold for $X, Y, W \in \Gamma(T\M),$
  \begin{align} \label{hseq11}
\overline{\nabla}_X(JY) = J\overline{\nabla}^*_XY,~\cri(X, Y)JZ = J\cri^*(X, Y)Z,~ \csb(X, Y)JZ = J\csb(X, Y)Z. 
   \end{align}
  \begin{lemma}\cite{bc2}\label{statlemma1}
     Let $(\M, \overline{\nabla}, g, J)$ be a HSM. Then, for $X, Y, W, Z \in \Gamma(TM),$ the following holds:
     \begin{equation}
     g(\csb(X, Y)W, Z) = g(\csb(X, Y)JW, JZ) = g(\csb(JX, JY)W, Z).
     \end{equation}
     \end{lemma}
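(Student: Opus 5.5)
The plan is to derive all three equalities from the relations \eqref{hseq11} together with the standard symmetry of the statistical curvature tensor. The key fact I will use is that $\csb$ is the average of $\cri$ and $\cri^*$, and that for dual connections on a statistical manifold
\begin{align*}
g(\cri(X,Y)W,Z) = -g(W,\cri^*(X,Y)Z).
\end{align*}
This follows by differentiating $Xg(W,Z)=g(\overline{\nabla}_XW,Z)+g(W,\overline{\nabla}^*_XZ)$ twice. It gives that $g(\csb(X,Y)W,Z)$ is skew in $(W,Z)$. It is also skew in $(X,Y)$, satisfies the first Bianchi identity (both connections are torsion free), and hence has the pair symmetry $g(\csb(X,Y)W,Z)=g(\csb(W,Z)X,Y)$, exactly as for a Riemannian curvature tensor.

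For the first equality, I use $\csb(X,Y)JW=J\csb(X,Y)W$ from \eqref{hseq11}. Then
\begin{align*}
g(\csb(X,Y)JW,JZ)=g(J\csb(X,Y)W,JZ).
\end{align*}
Here I need $J$ to be $g$-orthogonal, i.e. $g(JU,JV)=g(U,V)$. This holds because $\omega(X,Y)=g(X,JY)$ is a $2$-form (so $g(X,JY)=-g(JX,Y)$) and $J^2=-I$. Hence the right side equals $g(\csb(X,Y)W,Z)$.

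For the second equality, I apply pair symmetry:
\begin{align*}
g(\csb(JX,JY)W,Z)=g(\csb(W,Z)JX,JY).
\end{align*}
By the first equality applied with the roles swapped, this equals $g(\csb(W,Z)X,Y)$. Pair symmetry once more returns $g(\csb(X,Y)W,Z)$.

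The main obstacle is justifying pair symmetry for $\csb$, since $\cri$ alone lacks it. I would first prove skew-symmetry in the last two slots using the duality identity above: averaging $g(\cri W,Z)=-g(W,\cri^*Z)$ and $g(\cri^*W,Z)=-g(W,\cri Z)$ gives $g(\csb(X,Y)W,Z)=-g(W,\csb(X,Y)Z)$. Together with skewness in $(X,Y)$ and the Bianchi identity for $\csb$ (the average of two torsion-free Bianchi identities), the classical combinatorial argument then yields pair symmetry. If one prefers to avoid this, an alternative is to check the second equality directly via the identity $\cri(JX,JY)=\cri^*(X,Y)$-type relations, but I expect the pair-symmetry route to be cleaner.
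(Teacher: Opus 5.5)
The paper gives no proof of this lemma. It is quoted from \cite{bc2}, so there is no argument in the paper to compare yours against.

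Your argument is correct and gives a complete, self-contained proof. The steps all check out:
\begin{itemize}
\item The duality identity $g(\cri(X,Y)W,Z)=-g(W,\cri^*(X,Y)Z)$ makes $g(\csb(X,Y)W,Z)$ skew in $(W,Z)$.
\item Averaging the two torsion-free first Bianchi identities gives the Bianchi identity for $\csb$. So $g(\csb(\cdot,\cdot)\cdot,\cdot)$ is an algebraic curvature tensor and has pair symmetry.
\item $g(JU,JV)=g(U,V)$ follows from the skewness of $\omega$ together with $J^2=-I$.
\item The two equalities then follow exactly as you wrote. The first uses $\csb(X,Y)JW=J\csb(X,Y)W$. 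The second uses pair symmetry, then the first equality in the swapped slots, then pair symmetry again.
\end{itemize}
Your proof uses only \eqref{hseq11} and the statistical structure, not integrability of $J$. It is naturally a statement on $T\M$; the $\Gamma(TM)$ in the lemma is evidently a typo for $\Gamma(T\M)$.

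You should drop the fallback route you mention at the end. No identity of the form $\cri(JX,JY)=\cri^*(X,Y)$ holds for the individual curvature tensors. Take flat $\mathbb{C}$ with $\overline{\nabla}=\nabla^0+K$, where $g(K_XY,Z)=\mathrm{Re}(\bar z\,dz(X)\,dz(Y)\,dz(Z))$.
\begin{itemize}
\item This is a holomorphic statistical manifold, because $dz\circ J=i\,dz$ gives $K_XJ=-JK_X$, which is equivalent to $\overline{\nabla}_X(JY)=J\overline{\nabla}^*_XY$.
\item In real dimension two, $\cri(JX,JY)=\cri(X,Y)$ holds trivially.
\item However, $\cri-\cri^*=2[(\nabla^0_XK)_Y-(\nabla^0_YK)_X]\neq 0$, because the coefficient $\bar z$ is not holomorphic.
\end{itemize}
So $\cri(JX,JY)\neq\cri^*(X,Y)$ here. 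The $J$-invariance in the first pair belongs to the averaged tensor $\csb$ and comes precisely from its pair symmetry. That makes your main route necessary, not just cleaner.
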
 
  \begin{definition}\label{hswcc1} 
The Weyl curvature tensor $\mathcal{C}^s$ for statistical manifolds is given by
\begin{align}\label{hsw2}
\mathcal{C}^s(X, Y)Z =& ~\csb(X, Y)Z - \dfrac{1}{(n-2)} [g(Y, Z)\clb X - g(\clb X, Z)Y + g(\clb Y, Z)X \nonumber \\
& - g(X, Z)\clb Y] + \frac{\overline\rho}{(n-2)(n-1)}[g(Y, Z)X - g(X, Z)Y].
\end{align}
\end{definition}

      Next we define the statistical Bochner curvatue tensor in a way similar to the Bochner curvatue tensor defined by Bochner in \cite{bc5}.
      \begin{definition}\label{hsboc1}
      Let $\M$ be a HSM of real dimension $2m$. Let $\csb, \clb, \overline{\rho}$ denote the statistical curvature tensor, statistical Ricci curvature tensor and statistical scalar curvature tensor of $\M$, respectively. Then the statistical Bochner curvatue tensor $\cbb$ of type $(1, 3)$ on vector fields $X, Y, Z \in \Gamma(\overline{TM})$ is given by
      \begin{align}\label{hsbf1}
  \cbb(X, Y)Z =& \csb(X, Y)Z - \frac{1}{2m+4}\{g(Y, Z)\clb X - g(\clb X, Z)Y + g(\clb Y, Z)X \nonumber \\ 
  &- g(X, Z)\clb Y + g(JY, Z)\clb JX - g(\clb JX, Z)JY + g(\clb JY, Z)JX \nonumber \\ 
  &-  g(JX, Z)\clb JY - 2 g(JX, \clb Y)JZ - 2 g(JX, Y)\clb JZ \} \nonumber \\ 
  &+ \frac{\overline{\rho}}{(2m+4)(2m+2)}\{g(Y, Z)X - g(X, Z)Y + g(JY, Z)JX \nonumber \\ 
  &- g(JX, Z)JY - 2g(JX, Y)JZ\}.
      \end{align}
      \end{definition}

    \begin{definition}\cite{bc4}\label{hsEinstein}    
  A statistical manifold $\M$ is said to be statistical Einstein manifold if the statistical Ricci curvature $\clb(X,X)$ is constant for any unit vector field $X \in \Gamma(T\M).$
    \end{definition}           
     
    \begin{proposition}\cite{bc4}\label{hsEinstein12} 
      Let $(\M, \overline{\nabla}, g)$ be a statistical Einstein manifold of dimension $n$. Then the statistical Ricci curvature tensor $\clb$ of $\M$ is of the form $\clb = \dfrac{\overline\rho}{n} g,$ where $\overline{\rho}$ denote the statistical scalar curvature of $\M$.
       \end{proposition}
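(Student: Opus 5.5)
The plan is to show that the hypothesis forces $\clb$ to be a pointwise multiple of $g$, and then to identify the multiple by taking a trace. Write $c$ for the common value of $\clb(X,X)$ over unit vectors $X$ at a point. If "constant" is only meant pointwise, $c$ is a smooth function, and the argument is unchanged.

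First I will check that $\clb$ is a symmetric tensor, since the polarization step needs it. Differentiating $g(Z,W)$ twice, using the duality relation $Xg(Y,Z)=g(\overline\nabla_XY,Z)+g(Y,\overline\nabla^*_XZ)$, gives
\begin{align*}
g(\cri(X,Y)Z,W) = -g(Z,\cri^*(X,Y)W).
\end{align*}
Averaging this identity with the same identity for $\cri^*$ shows that $g(\csb(X,Y)Z,W)$ is skew-symmetric in $(Z,W)$. It is also skew in $(X,Y)$. Both $\overline\nabla$ and $\overline\nabla^*$ are torsion free, so $\cri$ and $\cri^*$ each satisfy the first Bianchi identity, and hence so does $\csb$. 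The standard algebraic argument used for Riemannian curvature then gives the pair symmetry
\begin{align*}
g(\csb(X,Y)Z,W)=g(\csb(Z,W)X,Y).
\end{align*}
Taking the trace over $X=W=e_i$ in \eqref{hsricc} yields $\clb(Y,Z)=\clb(Z,Y)$. This verification is where the real content lies: without symmetry, the condition on $\clb(X,X)$ would only determine the symmetric part of $\clb$.

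Next I use homogeneity. For any nonzero $X$, apply the hypothesis to the unit vector $X/\|X\|$ to get $\clb(X,X)=c\,g(X,X)$; this identity also holds trivially for $X=0$. Polarizing, by expanding $\clb(X+Y,X+Y)$ and using the symmetry of both $\clb$ and $g$, gives $\clb(X,Y)=c\,g(X,Y)$ for all $X,Y$.

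Finally, I take the trace with respect to a local orthonormal frame, as in \eqref{hsscal}:
\begin{align*}
\overline\rho=\sum_{i=1}^n\clb(e_i,e_i)=nc.
\end{align*}
Hence $c=\overline\rho/n$ and $\clb=\frac{\overline\rho}{n}g$. The only step needing care is the symmetry of $\clb$; everything after it is routine linear algebra.
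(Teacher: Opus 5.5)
Your proof is correct. The paper states this result only by citation to \cite{bc4} and gives no proof of its own, and your route (reduce the hypothesis to $\clb(X,X)=c\,g(X,X)$, polarize, then trace to get $c=\overline{\rho}/n$) is the standard argument, with the symmetry of $\clb$ correctly derived from the pair symmetry of $g(\csb(X,Y)Z,W)$. That symmetry check is genuinely needed, since the Ricci tensors of $\overline{\nabla}$ and $\overline{\nabla}^*$ separately need not be symmetric.
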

     
     \begin{definition}\cite{bc2}\label{hshsec}
     A HSM $(\M, \nabla, g, J)$ is said to be of constant holomorphic sectional curvature $c (\in\mathbb{R})$ if the sectional curvature is constant $c$ for any $x \in \M$ and for any $J$-invariant two dimensional subspace of $T_x\M.$
       \end{definition}       
    
    \begin{remark}\cite{bc2}\label{hssec11}
    If a HSM $(\M, \nabla, g, J)$ has constant holomorphic sectional curvature then for $X, Y, Z \in \Gamma(T\M),$ the statistical curvature tensor is given by
    \begin{align}\label{hsre1}
   \csb(X, Y)Z = \dfrac{c}{4} [g(Y, Z)X - g(X, Z)Y + g(JY, Z)JX - g(JX, Z)JY - 2 g(JX, Y)JZ].   
       \end{align}
    \end{remark}
      
Let $M$ be an $n$-dimensional isometrically immersed submanifold of a HSM $(\M, \nabla, g, J)$. Let us denote the induced metric on $M$ by the same symbol $g$. Then the tangent space $T_x\M$ at $x \in \overline{M}$ has the following orthogonal decomposition in terms of tangent and normal space of $M$
      \begin{equation*}
      T_x\overline{M} = T_xM \oplus (T_xM)^{\perp}.
      \end{equation*}
   Accordingly, for $X, Y \in \Gamma(TM)$ and $\xi \in \Gamma(TM)^{\perp}$, the Gauss and Weingarten formulae are, respectively, given by \cite{bc7}
     \begin{align}
   & \overline{\nabla}_XY = {\nabla}_XY + h(X, Y), ~~ \overline{\nabla}^*_XY = {\nabla}^*_{X}Y + h^*(X, Y), \label{hsgf1} \\
   &  \overline{\nabla}_X\xi = -\mathcal{A}^*_{\xi}X + \mathcal{D}_X{\xi}, ~~~~~~ \overline{\nabla}^*_X\xi = -\mathcal{A}_{\xi}X + \mathcal{D}^*_X{\xi}, \label{hsgf2} 
     \end{align}
where $h$ and $h^*$ are the imbedding curvature tensors of $M$ in $\M$ for $\overline{\nabla}$ and $\overline{\nabla}^*$, respectively. ${\mathcal{A}}^*_{\xi}X$, $\mathcal{A}_{\xi}X$ are tangential components and $\mathcal{D}_X{\xi},$ $ \mathcal{D}^*_X{\xi}$ are the normal components of $\overline{\nabla}_X\xi ,$ $ \overline{\nabla}^*_X\xi$, respectively. Also, we have
    \begin{align}
      g({\mathcal{A}}_{\xi}X, Y) = g(h(X, Y), \xi) , ~~g({\mathcal{A}}^*_{\xi}X, Y) = g(h^*(X, Y), \xi). \label{hswf1}
 %   &mm \label{hswf2}.
    \end{align}

    The Gauss equation is given by \cite{bc4}
    \begin{align}\label{hsge1}
   2 g(\csb(X, Y)Z, W) =&~ 2 g(\cs(X, Y)Z, W) - g(h(Y, Z), h^{*}(X, W)) \nonumber \\ 
   &+ g(h(X, Z), h^{*}(Y, W)) - g(h^{*}(Y, Z), h(X, W)) \nonumber \\ 
   &+ g(h^{*}(X, Z), h(Y, W)).
    \end{align}  	
    %%%%%%%%%%%%%%%%%%%%%%%%%%%%%%%%%%%%%%%%%%%%%%%%%%%%%%%%%%%%%%%%%%
       \section{Main Results}\label{hssec3}
   In this section we prove the main results. We start with an important lemma that is required throughout our study.    
     \begin{lemma}\label{hslemma1}
      The statistical Ricci curvature tensor $\clb$ of a HSM is complex linear i.e., $J\clb = \clb J$.
      \end{lemma}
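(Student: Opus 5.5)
The plan is to reduce the operator identity $J\clb=\clb J$ to the $J$-invariance of the bilinear form, namely $\clb(JY,JZ)=\clb(Y,Z)$ for all $Y,Z\in\Gamma(T\M)$. Here $\clb$ is regarded as the $(1,1)$-tensor determined by $g(\clb X,Z)=\clb(X,Z)$, as in Definition \ref{hsboc1}. The tools are the trace formula \eqref{hsricc}, the relation $\csb(X,Y)JZ=J\csb(X,Y)Z$ from \eqref{hseq11}, and Lemma \ref{statlemma1}. I will also use $J^2=-I$ and the compatibility $g(JX,JY)=g(X,Y)$, which is part of the almost Hermitian structure underlying a HSM. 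In particular, $J$ maps a local orthonormal frame to a local orthonormal frame.

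\textbf{Step 1 (invariance).} Fix a local orthonormal frame $\{e_1,\dots,e_{2m}\}$. First, rewrite each summand using \eqref{hseq11} and the compatibility of $g$ with $J$:
\begin{align*}
g(\csb(e_i,JY)JZ,e_i)=g(J\csb(e_i,JY)Z,e_i)=-g(\csb(e_i,JY)Z,Je_i).
\end{align*}
Next, change frame by setting $f_i=Je_i$, which is again orthonormal, so that $e_i=-Jf_i$. The summand then becomes $g(\csb(Jf_i,JY)Z,f_i)$. By Lemma \ref{statlemma1} this equals $g(\csb(f_i,Y)Z,f_i)$. Summing over $i$ and using that the trace in \eqref{hsricc} does not depend on the choice of orthonormal frame gives $\clb(JY,JZ)=\clb(Y,Z)$.

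\textbf{Step 2 (operator form).} Replace $Z$ by $JZ$ in Step 1 and use $J^2=-I$ to get $\clb(JX,Z)=-\clb(X,JZ)$. Then
\begin{align*}
g(\clb JX,Z)=\clb(JX,Z)=-\clb(X,JZ)=-g(\clb X,JZ)=g(J\clb X,Z),
\end{align*}
where the last equality uses $g(JU,V)=-g(U,JV)$. Since $Z$ is arbitrary, this yields $\clb J=J\clb$.

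The only delicate point is the bookkeeping in Step 1. One must apply Lemma \ref{statlemma1} in the slot form that moves $J$ off the first two arguments, and one must track the sign produced when substituting $e_i=-Jf_i$. Frame independence of the trace is standard, so I expect no further obstacle.
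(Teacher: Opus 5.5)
Your proof is correct. Step 2 is essentially the paper's final step, but you obtain the invariance $\clb(JY,JZ)=\clb(Y,Z)$ by a different mechanism.

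The paper works in a $J$-adapted frame $\{e_1,\dots,e_m,Je_1,\dots,Je_m\}$. It uses Lemma \ref{statlemma1} and the skew-symmetries of $\csb$ to rewrite the two half-sums of the trace. It then relies, without saying so, on the first Bianchi identity for $\csb$ to combine them into the K\"{a}hler-type expression $\clb(U,V)=\sum_{i=1}^{m}g(\csb(e_i,Je_i)JU,V)$. The invariance is read off from this by substituting $JU,JV$.

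You instead change frame by $f_i=Je_i$, which amounts to invariance of the trace under conjugation by $J$. You combine this with $\csb(X,Y)J=J\csb(X,Y)$ from \eqref{hseq11} and $\csb(JX,JY)=\csb(X,Y)$ from Lemma \ref{statlemma1}. Your sign bookkeeping is right: the sign from $e_i=-Jf_i$ cancels the one from $g(JU,V)=-g(U,JV)$.

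Your route is shorter and needs neither an adapted frame nor the Bianchi identity. It also applies verbatim to any $(1,3)$-tensor with these two $J$-compatibility properties. The paper's route costs an extra curvature identity. In return it gives an explicit formula for $\clb$ as a contraction of $\csb$ along the pairs $(e_i,Je_i)$, the statistical analogue of the classical formula for the Ricci form of a K\"{a}hler manifold in terms of its curvature.
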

      \begin{proof}
      Let $\M$ be a HSM of real dimension $2m.$ Choose a local orthonormal frame $\{e_1, ..., e_m, Je_1, ..., Je_m\}$ of $\overline{M}.$ Then, for $U, V \in \Gamma(T\M),$ the statistical Ricci curvature tensor $\clb$ is given by
      \begin{align}\label{hsle1}
      \clb(U, V) =& \sum_{i=1}^{m}g(\csb(e_i, U)V, e_i) + \sum_{i=1}^{m}g(\csb(Je_i, U)V, Je_i)
      \end{align}
    Using lemma \ref{statlemma1} and \eqref{hseq11} in \eqref{hsle1}, we get
        \begin{align*}%\label{hsle211}
      \clb(U, V) &= \sum_{i=1}^{m}g(\csb(e_i, U)JV, Je_i) - \sum_{i=1}^{m}g(\csb(Je_i, U)JV, e_i) \nonumber \\
      &= \sum_{i=1}^{m}g(\csb(U, e_i)Je_i, JV) + \sum_{i=1}^{m}g(\csb(Je_i, U)e_i, JV) \nonumber \\
       &= -\sum_{i=1}^{m}g(\csb(e_i, Je_i)U, JV).
       \end{align*}  
%Using (symmetries of S), we obtain
%        \begin{align*}%\label{hsle2}
%      \clb(U, V) =& \sum_{i=1}^{m}g(\csb(U, e_i)Je_i, JV) + \sum_{i=1}^{m}g(\csb(Je_i, U)e_i, JV),
%       \end{align*}  
%       which implies (using second Bianchi identity)
%        \begin{align*}%\label{hsle3}
%      \clb(U, V) &= -\sum_{i=1}^{m}g(\csb(e_i, Je_i)U, JV). %  \nonumber \\
%          \end{align*}  
            Using \eqref{hseq11} in above equation, we obtain
             \begin{align}\label{hsle4} 
       \clb(U, V) &= \sum_{i=1}^{m}g(\csb(e_i, Je_i)JU, V).   %\nonumber \\
  \end{align}  
  Replacing $U$ with $JU$ and $V$ with $JV$ in above equation, we have
     \begin{align}\label{hsle5} 
       \clb(JU, JV) &= \sum_{i=1}^{m}g(\csb(e_i, Je_i)J^2U, JV) = \sum_{i=1}^{m}g(\csb(e_i, Je_i)JU, V).  
  \end{align}  
  From \eqref{hsle4} and \eqref{hsle5}, we get
    \begin{align}\label{hsle6} 
     \clb(JU, JV) = \clb(U, V).
      \end{align}  
      Now we have
        \begin{align}\label{hsle7}
 g(\clb(JU), V) = \clb(JU, V) = \clb(J^2U, JV) = -\clb(U, JV).
   \end{align}
   Also we have
    \begin{align}\label{hsle8}
 g(J(\clb U), V) = -g(\clb(U), JV) = -\clb(U, JV)
   \end{align}
   From \eqref{hsle7} and \eqref{hsle8}, we get
    \begin{align}\label{hsle9}
 g(J(\clb U), V) =  g(\clb(JU), V), 
    \end{align}
     which proves the assertion.
      \end{proof}
      
   \begin{proposition}\label{hslemma3}
       Let $\M$ be a HSM of real dimension $2m$  with vanishing Bochner curvature tensor. If $\M$ is an Einstein statistical manifold, then $\M$ has constant holomorphic sectional curvature.
              \end{proposition}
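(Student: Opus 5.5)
The plan is to substitute the Einstein condition into the definition of the statistical Bochner tensor, use $\cbb=0$ to solve for $\csb$, and recognize the result as the constant holomorphic sectional curvature form \eqref{hsre1}.

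By Proposition \ref{hsEinstein12}, with $n=2m$, the Einstein hypothesis gives $\clb = \frac{\overline\rho}{2m}g$. As a $(1,1)$-tensor this reads $\clb X = \frac{\overline\rho}{2m}X$, so $\clb J X = \frac{\overline\rho}{2m}JX$. This is consistent with Lemma \ref{hslemma1}.

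Next I substitute this into \eqref{hsbf1}, term by term:
\[
g(\clb X,Z)Y=\tfrac{\overline\rho}{2m}g(X,Z)Y,\qquad g(\clb JX,Z)JY=\tfrac{\overline\rho}{2m}g(JX,Z)JY,
\]
\[
g(JX,\clb Y)JZ=\tfrac{\overline\rho}{2m}g(JX,Y)JZ,\qquad g(JX,Y)\clb JZ=\tfrac{\overline\rho}{2m}g(JX,Y)JZ,
\]
and similarly for the remaining terms. Write
\[
E(X,Y)Z=g(Y,Z)X-g(X,Z)Y+g(JY,Z)JX-g(JX,Z)JY-2g(JX,Y)JZ .
\]
Then the braced expression multiplying $\frac{1}{2m+4}$ in \eqref{hsbf1} collapses to $\frac{2\overline\rho}{2m}E(X,Y)Z$. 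The bookkeeping to check here is that the two coefficient-$(-2)$ terms add to $-4\frac{\overline\rho}{2m}g(JX,Y)JZ$, which is exactly twice the $-2g(JX,Y)JZ$ coefficient in $E$.

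Setting $\cbb=0$ then gives
\[
\csb(X,Y)Z=\overline\rho\left[\frac{1}{m(2m+4)}-\frac{1}{(2m+4)(2m+2)}\right]E(X,Y)Z=\frac{\overline\rho}{2m(m+1)}E(X,Y)Z .
\]
Comparing with \eqref{hsre1}, this is the curvature form of constant holomorphic sectional curvature $c$ with $\frac{c}{4}=\frac{\overline\rho}{2m(m+1)}$, that is, $c=\frac{2\overline\rho}{m(m+1)}$. To confirm this directly, I take a unit $X$ and compute $g(\csb(X,JX)JX,X)=\frac{\overline\rho}{2m(m+1)}\cdot 4$, which equals $c$, so every holomorphic sectional curvature equals $c$.

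The main obstacle is showing that $c$ is a genuine constant, since a priori $\overline\rho$ is only a function. Definition \ref{hsEinstein} states that $\clb(X,X)$ is constant for unit $X$, so $\overline\rho/2m$ is constant and this settles it under the paper's definition. If instead a pointwise Einstein definition were intended, I would need a Schur-type argument. That would go through the second Bianchi identity for $\overline\nabla$ and $\overline\nabla^*$, averaged to handle $\csb$, in real dimension $2m\ge4$. I would invoke the definition and avoid this.
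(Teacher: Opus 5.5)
Your argument is correct and is the paper's argument: substitute $\clb=\frac{\overline\rho}{2m}g$ into \eqref{hsbf1}, set $\cbb=0$, and match the result with \eqref{hsre1}; your explicit check of $\mathcal{K}(X\wedge JX)$ and your remark on why $\overline\rho$ is constant are steps the paper leaves implicit. One arithmetic slip: $\frac{1}{m(2m+4)}-\frac{1}{(2m+4)(2m+2)}=\frac{1}{4m(m+1)}$, not $\frac{1}{2m(m+1)}$, so $c=\frac{\overline\rho}{m(m+1)}$ (this agrees with $\overline\rho=m(m+1)\kappa$ in the paper's corollary, and the paper's own coefficient $\frac{\overline\rho}{4(m+1)}$ also drops a factor $m$), but this does not affect the conclusion that the holomorphic sectional curvature is constant.
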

      \begin{proof}
    Since $\M$ is an Einstein statistical manifold and has vanishing Bochner curvature tensor, therefore for $X, Y, Z \in \Gamma(T\M),$ using Proposition \ref{hsEinstein12} and equation \eqref{hsbf1}, we obtain 
      \begin{align*}%\label{hsl3.21}
       \csb =& \dfrac{\overline{\rho}}{2m(2m+4)} [2g(Y, Z)X - 2g(X, Z)Y + 2g(JY, Z)JX - 2g(JX, Z)JY \nonumber \\
       &- 4g(JX, Y)JZ] - \dfrac{\overline{\rho}}{(2m+4)(2m+2)} [g(Y, Z)X - g(X, Z)Y + g(JY, Z)JX.  \nonumber \\
        &- g(JX, Z)JY - 2 g(JX, Y)JZ],
         \end{align*}
     which on further simplification gives  
      \begin{align}\label{hsl3.111}
   \csb = \dfrac{\overline{\rho}}{4(m+1)} [g(Y, Z)X - g(X, Z)Y + g(JY, Z)JX - g(JX, Z)JY - 2 g(JX, Y)JZ].   
       \end{align} 
      Using Remark \ref{hssec11} and \eqref{hsl3.111}, we get the assertion.
     \end{proof}
   Next we prove our main result for Lagrangian statistical submanifolds of Bochner holomorphic statistical manifolds.           

\begin{proof}[Proof of Theorem \ref{hstheorem1}]
Consider a local orthonormal frame field $\{e_1, ..., e_m\}$ of vector fields tangent to $M$ and $\{Je_1, ..., Je_m\}$, a local orthonormal frame field of normal vector fields to $M$ in $\M,$ such that $\{e_1, ..., e_m, e_{m+1} = Je_1, ..., e_{2m} = Je_m\} $is a local orthonormal frame field of $T\M.$ Since Bochner curvature vanishes, for $Y, W \in \Gamma(T \M)$ using \eqref{hsboc1}, we obatin
 \begin{align}\label{hsth1eq1}
      \sum_{i=1}^{m}g(\csb(Je_i, Y)Je_i, W) =& \frac{1}{2(m+2)} \sum_{i=1}^{m}\{-g(\clb Je_i, Je_i)g(Y, W) - g(\clb Y, W) \nonumber \\
      &- g(Y, e_i)g(\clb e_i, W) - g(\clb JY, Je_i)g(e_i, W)  \nonumber \\
      &- 2g(e_i, \clb Y)g(e_i, W) - 2g(e_i, Y)g(\clb e_i, W)\}  \nonumber \\
      &+ \frac{\overline\rho}{4(m+1)(m+2)}\sum_{i=1}^{m}\{g(Y, W) + g(JY, Je_i)g(e_i, W)  \nonumber \\
      &+ 2g(Y, e_i)g(e_i, W)\}.                                                                                                                                                                                                 
         \end{align} 
Using Lemma \ref{hslemma1} and (compatibility of J) in \eqref{hsth1eq1}, we get
 \begin{align}\label{hsth1eq2}
  \sum_{i=1}^{m}g(\csb(Je_i, Y)Je_i, W) =& \frac{1}{2(m+2)}\sum_{i=1}^{m}\{-g(\clb e_i, e_i)g(Y, W) - g(\clb Y, W) \nonumber \\
      &- g(Y, e_i)g(\clb e_i, W) - g(\clb Y, e_i)g(e_i, W)  \nonumber \\
      &- 2g(e_i, \clb Y)g(e_i, W) - 2g(e_i, Y)g(\clb e_i, W)\}  \nonumber \\
      &+ \frac{(m+3) ~\overline\rho}{4(m+1)(m+2)} g(Y, W),
   \end{align} 
      which implies
     \begin{align}\label{hsth1eq3}
  \sum_{i=1}^{m}g(\csb(Je_i, Y)Je_i, W) =& \frac{\overline\rho}{(m+1)(2m+4)} g(Y, W) - \frac{m+6}{2m+4}g(\clb Y, W).
    \end{align} 
Also from \eqref{hsricc}, \eqref{hsge1} and \eqref{hsth1eq3}, we obtain the statistical Ricci curvature as
  \begin{align}\label{hsth1eq4}
  g(\clb Y, W) =& \frac{2(m+2)}{(m-2)}g(\cl Y, W) - \frac{\overline\rho}{(m-2)(m+1)}g(Y, W) %\nonumber \\
  + \frac{(m+2)}{(m-2)} \ch(Y, W),
    \end{align} 
    where we have used shorthand notation $\ch(Y, W)$ defined as
    \begin{align}\label{hsth1sn1}
    \ch(Y, W) =& \sum_{i=1}^{m}\{g(h(e_i, Y), h^*(e_i, W)) - g(h(e_i, e_i), h^*(Y, W)) \nonumber \\
  &+ g(h^*(e_i, Y), h(e_i, W)) - g(h^*(e_i, e_i), h(Y, W))\}.
     \end{align} 
    Using \eqref{hsscal}, \eqref{hsth1eq4} and \eqref{hsth1sn1}, we obtain the statistical scalar curvature $\overline\rho$ of $\M$ as
     \begin{align}\label{hsth1eq5}
  \overline\rho =& \dfrac{4(m+1)}{(m-1)}\rho + \frac{2(m+1)}{(m-1)} \sum_{j=1}^{m} \ch(e_j, e_j).
%  \{2 g(h(e_i, e_j), h^*(e_i, e_j)) - g(h(e_i, e_i), h^*(e_j, e_j)) \nonumber \\ &- g(h^*(e_i, e_i), h(e_j, e_j))\}. 
     \end{align}
     From \eqref{hsth1eq4} and \eqref{hsth1eq5}, we have
       \begin{align}\label{hsth1eq6}
    g(\clb Y, W) =& \frac{2(m+2)}{(m-2)}g(\cl Y, W) - \frac{4~\rho}{(m-1)(m-2)}g(Y, W) + \frac{(m+2)}{(m-2)} \ch(Y, W)  \nonumber \\
     &- \frac{2}{(m-1)(m-2)}\sum_{j=1}^{m} \ch(e_j, e_j)g(Y, W).
   \end{align}   
% \nonumber \\ &+ \frac{(m+2)}{(m-2)} \sum_{i=1}^{m}\{g(h(e_i, Y), h^*(e_i, W)) - g(h(e_i, e_i), h^*(Y, W)) \nonumber \\
%  &+ g(h^*(e_i, Y), h(e_i, W)) - g(h^*(e_i, e_i), h(Y, W))\} \nonumber \\
%   &- \frac{2}{(m-1)(m-2)}\sum_{i, j=1}^{m}\{2 g(h(e_i, e_j), h^*(e_i, e_j)) - g(h(e_i, e_i), h^*(e_j, e_j)) \nonumber \\
%  &- g(h^*(e_i, e_i), h(e_j, e_j))\} g(Y, W).    
 Since $\M$ has vanishing Bochner curvature tensor. Hence, for $X, Y, Z, W \in \Gamma(TM),$ using \eqref{hsboc1}, \eqref{hsge1} and \eqref{hsth1eq6}, we obtain
  \begin{align}\label{hsth1eq7}    
  0 =&~ 2g(\cs(X, Y)Z, W) - g(h(Y, Z), h^{*}(X, W)) + g(h(X, Z), h^{*}(Y, W)) \nonumber \\ 
   &- g(h^{*}(Y, Z), h(X, W)) + g(h^{*}(X, Z), h(Y, W)) \nonumber \\ 
   &- \frac{2}{m-2} [g(Y, Z)g(\cl X, W) - g(Y, W)g(\cl X, Z) + g(X, W)g(\cl Y, Z)  \nonumber \\ 
   &- g(X, Z)g(\cl Y, W)] + \frac{2~\rho}{(m-1)(m-2)}[g(Y, Z)g(X, W) - g(X, Z)g(Y, W)] \nonumber \\ 
   &\frac{1}{(m-1)(m+2)}[g(Y, Z)g(X, W) - g(X, Z)g(Y, W)]\sum_{j=1}^{m}\ch(e_j, e_j) \nonumber \\ 
   & - \frac{1}{m+2}[g(Y, Z) \alpha(X, W) - g(Y, W) \alpha(X, Z) + g(X, W) \alpha(Y, Z) \nonumber \\  
   &- g(X, Z) \alpha(Y, W)],
    \end{align}     
    where $\alpha(U, V)$ stands for 
      \begin{align}\label{hsth1sn2} 
  \alpha(U, V) = \frac{(m+2)}{(m-2)} \ch(U, V) - \frac{2}{(m-1)(m-2)}\sum_{j=1}^{m} \ch(e_j, e_j)g(U, V).
      \end{align}  
      As $M$ being doubly autoparallel implies the vanishing of $h$ and $h^*$, therefore from \eqref{hsw2} and \eqref{hsth1eq7}, it follows that the Weyl conformal curvature tensor $\mathcal{C}^s$ of $M$ vanishes. Hence, the assertion follows. Also we observe that if $M$ is not doubly autoparallel, but the terms containing $h$ and $h^*$ in the expression \eqref{hsth1eq7} vanish, then $M$ is conformally flat.
      %the assertion follows. 
\end{proof}
%%%%%%%%%%%%%%%%%%%%%%%%%%%%%%%%%%%%%%%%%%%%%%%%%%%%%%%%%%%%%%%%%%
\begin{corollary}
 Let $\M$ be a HSM of real dimension ${2m}, m \geq 4$ with vanishing Bochner curvature tensor. If $\M$ has constant holomorphic sectional curvature $\kappa$ then a doubly autoparallel Lagrangian statistical submanifold $M$ of $\M$ is a space of constant curvature $\frac{\kappa}{4}$.
\end{corollary}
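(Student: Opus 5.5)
We plan to work directly from the Gauss equation \eqref{hsge1} rather than through the Weyl tensor. Since $M$ is doubly autoparallel, $h=h^*=0$. The Gauss equation then gives $g(\csb(X,Y)Z,W)=g(\cs(X,Y)Z,W)$ for all $X,Y,Z,W\in\Gamma(TM)$. So it suffices to compute the tangential part of $\csb$ on tangent vectors, using the explicit form available in the constant holomorphic sectional curvature case.

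By Remark \ref{hssec11}, constant holomorphic sectional curvature $\kappa$ gives
\[
\csb(X,Y)Z=\frac{\kappa}{4}\bigl[g(Y,Z)X-g(X,Z)Y+g(JY,Z)JX-g(JX,Z)JY-2g(JX,Y)JZ\bigr].
\]
The vanishing of the Bochner tensor is consistent with this form: substituting it into \eqref{hsbf1} gives zero, as in the proof of Proposition \ref{hslemma3}. So the hypothesis $\cbb=0$ is not actually needed beyond being compatible with the setting.

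Now take $X,Y,Z,W$ tangent to the Lagrangian submanifold $M$. Because $M$ is Lagrangian, $J$ maps $TM$ into $TM^{\perp}$. Hence $g(JY,Z)=g(JX,Z)=g(JX,Y)=0$, and all the $J$-terms drop out. Pairing with $W$ then yields
\[
g(\cs(X,Y)Z,W)=\frac{\kappa}{4}\bigl[g(Y,Z)g(X,W)-g(X,Z)g(Y,W)\bigr].
\]
Since $W$ ranges over $TM$, this gives $\cs(X,Y)Z=\frac{\kappa}{4}\{g(Y,Z)X-g(X,Z)Y\}$. By \eqref{hssecc1}, $M$ has constant statistical sectional curvature $\kappa/4$.

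An alternative route is to plug $\clb$ from \eqref{hsth1eq6} into the Einstein relation given by Proposition \ref{hsEinstein12}. That is messier and unnecessary.

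There is no real obstacle here. The only care needed is the Lagrangian orthogonality $g(JU,V)=0$ for $U,V\in TM$, and reading Remark \ref{hssec11} in the normalization with $\kappa$ in place of $c$. One also has to identify the normalization $\kappa$ with $\overline\rho/(m+1)$ from \eqref{hsl3.111} if $\kappa$ is instead derived from the Einstein-plus-Bochner-flat route.
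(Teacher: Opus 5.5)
Your proof is correct, and it takes a genuinely different and much more direct route than the paper.

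The paper never restricts $\csb$ to $TM$ directly. It computes $\clb=\frac{(m+1)\kappa}{2}g$ and $\overline\rho=m(m+1)\kappa$ from Remark \ref{hssec11}. It substitutes these into the transfer formulas \eqref{hsth1eq4} and \eqref{hsth1eq5} from the proof of Theorem \ref{hstheorem1}, with $h=h^*=0$, to obtain $\cl=\frac{(m-1)\kappa}{4}g$ and $\rho=\frac{m(m-1)\kappa}{4}$. It then uses the conformal flatness of $M$ given by Theorem \ref{hstheorem1}, i.e.\ $\mathcal{C}^s=0$ in \eqref{hsw2}, to reconstruct $\cs$ from its Ricci and scalar curvature. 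That route presents the statement as a genuine corollary of Theorem \ref{hstheorem1}. However, it rests on the fairly involved identities \eqref{hsth1eq3}--\eqref{hsth1eq5}, and it carries along the hypotheses $\cbb=0$ and $m\geq 4$.

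Your route uses only three ingredients: the Gauss equation \eqref{hsge1} with $h=h^*=0$, the explicit form \eqref{hsre1}, and $g(JU,V)=0$ for $U,V\in\Gamma(TM)$. In doing so it shows three further things:
\begin{enumerate}
\item Bochner flatness is automatic here, as you note: the coefficient $\frac14-\frac{m+1}{2(m+2)}+\frac{m}{4(m+2)}$ vanishes.
\item No dimension restriction is needed.
\item Only total reality $J(TM)\subset (TM)^\perp$ is used, not the full Lagrangian condition.
\end{enumerate}

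One correction to your closing aside, which your argument does not depend on. Equation \eqref{hsl3.111} as printed drops a factor of $m$, since $\frac{1}{m(2m+4)}-\frac{1}{(2m+4)(2m+2)}=\frac{1}{4m(m+1)}$. So the Einstein-plus-Bochner-flat route gives $\kappa=\overline\rho/(m(m+1))$, not $\overline\rho/(m+1)$, in agreement with \eqref{hscor2}.
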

\begin{proof}
Choose a local orthonormal frame $\{e_1, ..., e_m, Je_1, ..., Je_m\}$ of $\overline{M}.$ Since $\M$ has constant holomorphic sectional curvature, using \eqref{hsre1} for $Y, Z \in \Gamma(T\M),$ the statistical Ricci tensor $\clb$ is given by 
 \begin{align}\label{hscor1}
g(\clb Y, Z) &=\sum_{i=1}^{m}g(\csb(e_i, Y)Z, e_i) + \sum_{i=1}^{m}g(\csb(Je_i, Y)Z, Je_i)  \nonumber \\  
&= \sum_{i=1}^{m}\dfrac{\kappa}{4} [2 g(Y, Z) + 2g(Y, e_i)g(Z, e_i) + 2g(Y, Je_i)g(Z, Je_i)]   \nonumber \\
&= \sum_{i=1}^{m}\dfrac{\kappa (m+1)}{2} g(Y, Z).
  \end{align}     
Also, the statistical scalar curvature $\overline\rho$ of $\M$ is given as
 \begin{align}\label{hscor2}
 \overline\rho &= 2\sum_{i=1}^{m} g(\clb e_i, e_i) = m(m+1)\kappa. 
  \end{align}     
  Substituting the values of $\clb$ and $\overline\rho$ from \eqref{hscor1} and \eqref{hscor2} in \eqref{hsth1eq4} and \eqref{hsth1eq5}, we obtain the statistical Ricci tensor $\cl$ and the statistical scalar curvature $\rho$  of $M,$ respectively, as
   \begin{align}\label{hscor3} 
  g(\cl Y, Z) = \dfrac{(m-1)\kappa}{4} g(Y, Z)~~and~~ \rho = \dfrac{m(m-1)\kappa}{4}.
    \end{align}    
    Since $M$ is conformally flat by means of theorem \ref{hstheorem1}, hence, using \eqref{hsw2} and \eqref{hscor3}, we get the statistical curvature as
      \begin{align}\label{hscor4} 
     g(\cs (X, Y)Z, W) = \dfrac{\kappa}{4}[g(Y, Z)g(X, W) - g(X, Z) g(Y, W)],
     \end{align}        
     which proves that $\M$ is a space of constant curvature $\frac{\kappa}{4}$.
\end{proof}
%%%%%%%%%%%%%%%%%%%%%%%%%%%%%%%%%%%%%%%%%%%%%%%%%%%%%%%%%%%%%%%%%%
Now we prove our second theorem which states that if the holomorphic submanifold of a Bochner HSM is doubly autoparallel, then it has vanishing Bochner curvature tensor.

\begin{proof}[Proof of Theorem \ref{hstheorem2}]
Consider a local orthonormal frame field $\{e_1, ..., e_n, Je_1, ...,\\ Je_n, e_{n+1}, ..., e_m, Je_{n+1}, ..., Je_m\}$ of $T\M$ such that  $\{e_1, ..., e_n, Je_1, ..., Je_n\}$ and\\ $\{e_{n+1}, ..., e_m, Je_{n+1}, ..., Je_m\}$ are local orthonormal frame fields of $TM$ and $(TM)^\perp,$ respectively. Since $\M$ has vanishing Bochner curvature tensor, for $X, Y, Z \in \Gamma(T\M),$ using \eqref{hsge1}, \eqref{hsbf1}, we obtain the statistical sectional curvature of $M$ as
\begin{align}\label{bshs1}
\cs(X, Y)Z =& \frac{1}{2m+4}\{g(Y, Z)\clb X - g(\clb X, Z)Y + g(\clb Y, Z)X \nonumber \\ 
  &- g(X, Z)\clb Y + g(JY, Z)\clb JX - g(\clb JX, Z)JY + g(\clb JY, Z)JX \nonumber \\ 
  &-  g(JX, Z)\clb JY - 2 g(JX, \clb Y)JZ - 2 g(JX, Y)\clb JZ \} \nonumber \\ 
  &- \frac{\overline{\rho}}{(2m+4)(2m+2)}\{g(Y, Z)X - g(X, Z)Y + g(JY, Z)JX \nonumber \\ 
  &- g(JX, Z)JY - 2g(JX, Y)JZ\}.
\end{align}
Also, we know that the statistical Ricci curvature $\clb$ of $\M$ is given by
\begin{align}\label{bshs2}
g(\clb X, Y) =& g(\cl X, Y) - \sum_{\alpha=n+1}^{m}g(\csb(X, e_\alpha)Y, e_\alpha) - \sum_{\alpha=n+1}^{m}g(\csb(X, Je_\alpha)Y, Je_\alpha). 
\end{align}
Using \eqref{hsbf1} and the assumption that the Bochner curvature tensor of $\M$ vanishes, we evaluate the last two terms of \eqref{bshs2}. Substituting those values in \eqref{bshs2}, we obtain the statistical Ricci curvature $\clb$ as
\begin{align}\label{bshs3}
g(\clb X, Y) =& \dfrac{(m+2)}{(n+2)}g(\cl X, Y) + \dfrac{\epsilon}{(n+2)}g(X, Y),
\end{align}
where
\begin{align}\label{bshs4}
\epsilon = \frac{(n-m)}{2(m+1)}\overline{\rho} + \sum_{\alpha=n+1}^{m}g(\clb e_\alpha, e_\alpha).
\end{align}
%Taking trace in \eqref{bshs3}, we get the statistical scalar curvature $\overline\rho$ of $\M$ as
Using \eqref{bshs3}, we get
\begin{align}\label{bshs5111}
2\sum_{i=1}^{n}g(\clb e_i, e_i) = 2\sum_{i=1}^{n}\dfrac{(m+2)}{(n+2)}g(\clb e_i, e_i) + \dfrac{2n\epsilon}{(n+2)},
\end{align}
which gives the statistical scalar curvature $\overline\rho$ of $\M$ as
\begin{align}\label{bshs5}
\overline\rho = 2\sum_{\alpha=n+1}^{m}g(\clb e_\alpha, e_\alpha) + \dfrac{(m+2)}{(n+2)}\rho + \dfrac{2n\epsilon}{(n+2)}.
\end{align}
From \eqref{bshs4} and \eqref{bshs5}, we obtain
%Using \eqref{bshs4} in above equation, we obtain
\begin{align}\label{bshs6}
\dfrac{\overline\rho}{(m+1)(m+2)} = \dfrac{\rho}{(n+1)(n+2)} + \dfrac{4\epsilon}{(n+2)(m+2)}.
\end{align}
Using \eqref{bshs3} and \eqref{bshs6} in \eqref{bshs1}, we get the desired result.
\end{proof}

%%%%%%%%%%%%%%%%%%%%%%%%%%%%%%%%%%%%%%%%%%%%%%%%%%%%%%%%%%%%%%%%%% 
   \section{Conclusion}\label{hssec4}
   The results of this paper provide an initial understanding of the role of Bochner curvature in the statistical version of complex manifolds (HSM). The study of Lagrangian statistical submanifolds relates the vanishing of the Bochner curvature to their conformal behaviour, while the result for doubly autoparallel holomorphic submanifolds shows that this curvature property can be inherited by suitable submanifolds. Thus, Bochner curvature provides a useful geometric tool for investigating the interplay between the ambient holomorphic statistical structure and the geometry of its submanifolds. 
Since the contact analogue for Weyl tensor has been studied for certain contact manifolds, it would be interesting to extend this study to the contact statistical manifolds and their submanifolds in order to explore their conformal nature. 
      
            \section*{Acknowledgments}
The first author is thankful to UGC for providing financial assistance in terms of SRF scholarship vide NTA Ref. No.: $201610070797$(CSIR-UGC NET June 2020). The second author is thankful to the Department of Science and Technology (DST) Government of India for providing financial assistance in terms of FIST project (TPN-69301) vide the letter with Ref. No.: (SR/FST/MS-1/2021/104). 
\subsubsection*{Author contributions} All authors contributed equally.
\subsubsection*{Conflict of interest} There is no conflict of interest.
%\subsubsection*{Data Availability} There is no associate data.
%\textbf{Author contributions:} All authors contributed equally.\\	
%\textbf{Conflict of interest:} There is no conflict of interest.
%\subsection*{Author contributions} All authors contributed equally.
%
%\subsection*{Data Availability} There is no associate data.
%
%\subsection*{Declarations}
%
%\subsection*{Conflict of interest} There is no conflict of interest.

 \end{document}